\documentclass[12pt,twoside,reqno]{amsart}
\usepackage[colorlinks=true,citecolor=blue]{hyperref}
\usepackage{mathptmx, amsmath, amssymb, amsfonts, amsthm, mathptmx, enumerate, color,mathrsfs}
\usepackage{graphicx}

\usepackage{multirow}
\usepackage{epstopdf}
\usepackage{multicol}
\usepackage{algorithm}
\usepackage{algorithmic}
\usepackage{epstopdf}

\newtheorem{theorem}{Theorem}[section]
\newtheorem{lemma}[theorem]{Lemma}
\newtheorem{proposition}[theorem]{Proposition}
\newtheorem{corollary}[theorem]{Corollary}

\theoremstyle{definition}
\newtheorem{definition}[theorem]{Definition}

\newtheorem{example}[theorem]{Example}

\numberwithin{equation}{section}

\usepackage{cite}\usepackage{hyperref}
\usepackage{amsmath,amssymb,amsfonts}
\usepackage{algorithmic}
\usepackage{graphicx}
\usepackage{textcomp}
\usepackage{amsmath,amssymb,bm,bbm,mathrsfs,amscd}
\usepackage{calc}
\usepackage{color}
\usepackage{amsthm}
\usepackage{dsfont}
\usepackage{graphicx}
\usepackage{epstopdf}
\usepackage{epsfig}
\usepackage{tikz}
\usepackage{comment}
\usetikzlibrary{patterns}
\usepackage{amsmath}
\usepackage{amsfonts}
\usepackage{amssymb}
\usepackage{rotating}
\usepackage{mathtools}
\usepackage{color}
\usepackage{subfig}
\usepackage{enumerate,pgfplots}

\newcommand{\ba}{\begin{array}}
\newcommand{\ea}{\end{array}}
\newcommand{\be}{\begin{equation}}
\newcommand{\ee}{\end{equation}}

\newcommand{\mc}{\mathcal}
\newcommand{\0}{\mathbf{0}}
\newcommand{\1}{\mathbf{1}}
\newcommand{\x}{\mathbf{x}}

\renewcommand{\S}{\mathcal{S} }

\newcommand{\R}{\mathbb{R}}
\newcommand{\y}{\mathbf{y}}
\newcommand{\infl}{h}
\newcommand{\Infl}{\mathbf{\infl}}
\newcommand{\thre}{\theta}
\newcommand{\threabs}{\theta}
\newcommand{\Thre}{\mathbf{\theta}}

\newcommand{\we}{W^*}
\DeclareMathOperator*{\argmin}{argmin}

\begin{document}
\setcounter{page}{1}

\vspace*{2.0cm}
\title[Intervention problems in the Linear Threshold Model]
{Intervention problems in the Linear Threshold Model: a general formulation and new results\\[7pt]
{\small This work is dedicated to the memory of Sanjoy K. Mitter.}}
\author[G.~Como, S.~Durand, F.~Fagnani]{ Giacomo Como$^{1}$, Stephane Durand$^{2}$, Fabio Fagnani$^{1}$}
\maketitle
\vspace*{-0.6cm}

\begin{center}
{\footnotesize

$^1$Department of Mathematical Sciences ``G.L.~Lagrange'', Politecnico di Torino, Italy

$^2$LISN,  Universit\'e Paris Saclay, France.  

}\end{center}

\vskip 4mm {\footnotesize \noindent {\bf Abstract.}
We study an optimal intervention problem for  linear threshold models. This is a popular class of dynamical network systems whereby a number of agents, identified with the nodes of a graph, strategically change their binary action ($0$ or $1$) according to a threshold rule. Specifically, an agent adopts action $1$ if and only if the fraction of its neighbors in the interaction graph that do so is greater than or equal to a prescribed threshold. Assuming that a planner can modify the agents' thresholds at a cost equal to the aggregate threshold increase, we study the minimum intervention cost needed to ensure global convergence to the all-$1$ configuration. Our main contribution is the introduction of a new graph-theoretic quantity, called oriented path number,  that is the minimum number of disjoint paths needed to cover the graph that can be oriented to form a directed acyclic graph.  When thresholds are all equal to $1/2$, the optimal cost is shown to coincide with  the oriented path number, whereas, in the general case, it turns out to be the main ingredient of a bound on the optimal intervention cost.

 \noindent {\bf Keywords.} Network Systems; 
Linear threshold model; Optimal intervention; Least-cost influence problem. 

 \noindent {\bf 2020 Mathematics Subject Classification.}
 90C35, 93A16, 90C27. }

\renewcommand{\thefootnote}{}
\footnotetext{ 
E-mail addresses: giacomo.como@polito.it (G.~Como), stephane.durand@universite-paris-saclay.fr (S.~Durand), fabio.fagnani@polito.it (F.~Fagnani).

}

\section{Introduction}


A prominent feature of network systems is the possible emergence of contagion evolutions in response to localized external shocks. 
The core of this phenomenon lies in the interplay between the nature of the single agents' dynamics and the topology of their interaction. A key issue in this regard is to find simple indices capable of measuring the propensity of a system to such behavior and, in a complementary way, of measuring its resilience. 

Considerable progress has been made for models with pairwise interactions such as, e.g., epidemics and gossip models. For many of them, a reasonably complete theory is available \cite{DM:2010}, which shows how the extent and the speed of a contagion spreading is essentially related to the Cheeger constant of the underlying interaction network. In these models, a basic straightforward fact is that the addition of connectivity in the graph always increases the propensity for contagion to spread.

Pairwise interactions, however, are often inadequate for modelling strategic behaviour in network systems. Examples include, for instance, the spread of innovations or new technologies in social and economic networks or contagion dynamics in financial networks. A popular model in such contexts is the Linear Threshold Model (LTM) of cascades first introduced by Granovetter for fully mixed systems \cite{mG:1978} and later extended in various directions  \cite{sM:2000, djW:2002, fVR:2007, EK:2010}. This model can be interpreted as the best response dynamics in a network game whereby agents choose strategically between two actions ($0$ and $1$) and their payoff is an increasing function of the number of their neighbors choosing the same action. 

Various studies of the LTM model \cite{sM:2000, hA:2010, mL:2012, Moharrami:2016:EC, Rossi.ea:2019, Ravazzi.ea:23, Messina.ea:24} have investigated topological conditions guaranteeing or preventing a full contagion (i.e., a configuration where all agents choose action $1$) starting from an initial condition of relatively few agents choosing action $1$. Most of these studies concern random networks of a specific type. A remarkable exception is \cite{sM:2000} that introduces the concept of cohesiveness, which, for general graphs, can in principle characterize the extent of a spreading phenomenon. Though simple and conceptually elegant, this index is unfortunately not easy to be computed and thus of limited help in the determination of the network properties mostly responsible for triggering cascade propagations.

Optimal seeding problems have been studied for LTM: most of them basically consist in forcing a certain number of agents to activate in order to finally maximize in some sense the spread of the activated agents. The optimization problems can be formulated as the problem of targeting a fixed number $K$ of agents so that, if activated, they yield the maximal possible expansion of the contagion \cite{KKT:2003, CYZ:2010,GLS:2011} . A complementary possibility is the minimal targeting problem that is the determination of the minimum number of agents that, if activated, will lead to a full cascade \cite{Durand:2022}. Both problems are known to be NP-complete and suboptimal algorithms have been proposed. The problem has also been tackled in the statistical physics literature \cite{Altarelli.et.al:2013,Guggiola.Semerjian:2015, Sen.et.al:2017}. 

In this paper, we propose and study a fundamental intervention problem for LTM models. We imagine that an external planner can intervene on the system by injecting energy in the various agents so as to modify their thresholds. The optimization problem we consider is to find the minimal-cost intervention needed to trigger a full cascade. The problem is fundamentally different from the classical seeding problems as the cost of the intervention depends on continuous parameters instead of just combinatorial ones. In many applications, modeling the intervention in this way turns out to be a more natural choice (e.g., an agent with a very high threshold of activation needs more external energy to push its activation). The   minimal-cost intervention problem for the LTM has been recently investigated in \cite{Cianfanelli.ea:2026}, where approximate solutions for large-scale networks have been studied. 

Our first contribution is to show that our optimal problem can be equivalently reformulated as a combinatorial optimization problem on the group of permutations of the set of agents. The idea behind this is that each permutation describes a possible order of activation of the agents for which the minimum intervention cost can be directly computed. In this way, the problem becomes finding the permutation that minimizes the cost. While, not too surprisingly, the derived combinatorial optimization problem is NP-complete (the proof will be proposed elsewhere), fundamental insight can be gained on the problem by exploiting the network structure.

\section{The model}\label{sec:model}
The linear threshold model (LTM) is one of the most studied models of cascading propagation in networks. It was originally proposed in \cite{mG:1978} for fully mixed populations and since then 
many similar models have been studied. 
The core of the model and of most of its generalizations can be described as follows.
Consider a directed weighted  graph $\mc G=(\mc V, \mc E, W)$ where $\mc V=\{1,\ldots,n\}$ is the set of nodes representing interacting agents, $\mc E\subseteq\mc  V\times \mc V$ is the set of links, and $W$ in $\R_+^{n\times n}$ is the weight matrix. 
We require that $W_{ij}>0$ if and only if $(i,j)\in\mc E$
and denote by $w_i=\sum_j W_{ij}$  the weighted degree of node $i$ and let $\we=\frac{1}{2}\sum_iw_i$. In the case when $W_{ij}\in\{0,1\}$ for every $i,j$, we call the graph unweighted. In this case the matrix $W$ coincides with the adjacency matrix of the underlying graph $(\mc V, \mc E)$, $w_i$ is the degree of the node $i$. If $\mc G$ is a graph, let $\mc G^{\leftrightarrow}=(\mc V,\mc E^{\leftrightarrow})$ denote the undirected completion of $\mc G$ formally defining its set of links as 
$$\mc E^{\leftrightarrow}=\{(i,j)\in\mc V\times\mc V\,|\, \{(i,j), (j,i)\}\cap\mc E\neq\emptyset\}\,.$$


We identify the nodes of the graph with agents and define a class of discrete-time dynamics where each agent chooses between action $0$ and $1$ according to the following mechanism. Consider threshold maps $f_{\alpha}:\R_+\to [0,1]$
$$f_{\alpha}(x)=\left\{\begin{array}{ll} 0\quad &\hbox{if}\; x<\alpha\\ 1\quad &\hbox{if}\; x\geq \alpha\,, \end{array}\right.$$
and consider the following update rule:
\be\label{LTE1}x_i(t+1)=f_{\thre_iw_i}\left(\sum\nolimits_{j}W_{ij}x_j(t)\right)\qquad \forall i=1,\ldots,n\,.\ee
The value $\thre_i$ in $[0,1]$ is called the \emph{threshold} of node $i$. In words, at every time instant, an agent adopts action $1$ if and only if the total weight of links connecting node $i$ to neighbors already adopting action $1$ at the previous time exceeds or equals the value $\thre_iw_i$. We can compactly represent (\ref{LTE1}) as a dynamical system on $\{0,1\}^{n}$:
\be\label{LTE2}\x(t+1)=F(\x(t))\ee
An \emph{equilibrium point} of (\ref{LTE2}) is any $z$ in $\{0,1\}^{n}$ such that $F(z)=z$.

In this paper, we consider an intervention problem where an external planner can modify the nodes' thresholds. Formally, given a vector $\Infl$ in $\R_+^{n}$ (to be referred to as the \emph{intervention vector}), we consider the modified dynamics 
\be\label{LTE1int}x_i(t+1)=f_{\thre_iw_i-h_i}\left(\sum\nolimits_{j}W_{ij}x_j(t)\right)\,,\ee
compactly represented as
\be\label{LTE2int}x(t+1)=F_{\Infl}(x(t))\,.\ee
Consider the related evolution $x(t)=F^t_{\Infl}(x(0))$ starting from a generic initial condition $x(0)$ and define an intervention $h$ as  \emph{successful} when the evolution converges to the all-$1$ vector, from every initial condition. Formally, let the set of successful interventions be
\be\mc H=\left\{\Infl\in \R_+^{n}\,|\, \lim\limits_{t\to +\infty}\x(t)=\1,\;\forall \x(0)\in\{0,1\}^{n}\right\}\,.\ee
\begin{proposition}\label{prop:reprH} The following facts hold:
\begin{enumerate}
\item[(a)] For $\x(0)=0$, the limit $l= \lim\limits_{t\to +\infty}\x(t)$ exists and $x(t)=l$ for $t\geq n$;
\item[(b)] $\mc H=\left\{\Infl\in \R_+^{n}\,|\, \x(n)=\1\;\hbox{for}\, \x(0)=0\right\}$.
\end{enumerate}
\end{proposition}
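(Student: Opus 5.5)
The argument rests on monotonicity. Each map $x\mapsto f_{\thre_iw_i-h_i}(\sum_j W_{ij}x_j)$ is nondecreasing in $x$, because $W_{ij}\ge 0$ and $f_\alpha$ is a nondecreasing step function. Hence $F_{\Infl}$ is order preserving on $\{0,1\}^n$ with the componentwise order: $y\le z$ implies $F_{\Infl}(y)\le F_{\Infl}(z)$. Everything below follows from this, by induction on $t$.

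For (a), start from $\x(0)=\0$. Trivially $\x(0)\le \x(1)$. Applying $F_{\Infl}$ repeatedly and using monotonicity gives $\x(t)\le\x(t+1)$ for all $t$, so the trajectory is nondecreasing. The set of active nodes $\{i: x_i(t)=1\}$ is therefore a nondecreasing chain of subsets of $\mc V$.

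Now suppose $\x(t)=\x(t+1)$ for some $t$. Since the dynamics is deterministic, $\x(s)=\x(t)$ for all $s\ge t$. So as long as the trajectory has not stabilized, each step strictly adds at least one node to the active set. Since $|\mc V|=n$, there can be at most $n$ strict increases, which means stabilization occurs by time $n$. Thus $\x(t)=l$ for $t\ge n$, and $l$ is an equilibrium. The only care needed is this counting: the chain has at most $n+1$ distinct elements, from $\emptyset$ to $\mc V$, so at most $n$ strict steps.

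For (b), first take $\Infl\in\mc H$. By definition the trajectory from $\0$ converges to $\1$. By (a) it is constant from time $n$ onward, so $\x(n)=\1$.

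Conversely, suppose $\x(n)=\1$ for the trajectory from $\0$, and let $\y(0)$ be an arbitrary initial condition. Since $\0\le\y(0)$, monotonicity and induction give $F^t_{\Infl}(\0)\le F^t_{\Infl}(\y(0))$ for all $t$. For $t\ge n$ the left side equals $\1$ by (a), which forces $\y(t)=\1$. So every trajectory converges to (indeed reaches) $\1$ by time $n$, and $\Infl\in\mc H$.

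There is no real obstacle here. The crux is the monotonicity of $F_{\Infl}$, which uses nonnegativity of $W$ and the monotone threshold map, together with the comparison argument that the trajectory from $\0$ is the minimal one.
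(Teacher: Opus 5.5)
Your proof is correct and follows the same route as the paper. Monotonicity of $F_{h}$ makes the trajectory from $\mathbf{0}$ nondecreasing, so it changes at most $n$ times and is then stationary, and the comparison $F_{h}^t(\mathbf{y}(0))\ge F_{h}^t(\mathbf{0})$ gives (b). You are in fact a bit more explicit than the paper, writing out both inclusions in (b) and why one repeated state forces stationarity.
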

\begin{proof}
Consider the standard partial order on $\R^{n}$ whereby, for two vectors $\x$ and $\y$ in $\R^{n}$, we write that $\x\leq\y$ if $x_i\leq y_i$ for every $1\le i\le n$ and notice that $F_{\Infl}$ is monotonically nondecreasing with respect to such order. This implies that, since $F_{\Infl}(\0)\geq \0$, then $\x(t)=F^t_{\Infl}(\0)$ is monotonically nondecreasing and thus converges to a limit. Moreover, since every entry of $x(t)$ changes value at most once, by time $t=n$, we have reached a point for which $F^t_{\Infl}(\0)=F^{t+1}_{\Infl}(\0)$. From that point on, the sequence is stationary. This proves point (a).

Since $F_{\Infl}$ is monotonically nondecreasing, we have that
$$F^t_{\Infl}(\x(0))\geq F^t_{\Infl}(\0)\quad\forall t,\,\forall \x(0)$$
This yields point (b).
\end{proof}

We assume that every agent $i$ is equipped with a non-decreasing lower semicontinuous cost function $C_i:\R_+\to\R_+$: we interpret $C_i(\infl_i)$ as the cost incurred by the external planner to lower the threshold of agent $i$ of a value $\infl_i$. We gather all cost functions in a vector denoted by $\mathbf C$.
We formulate the following optimization problem.

\be\label{minimum-interv} C^*(\mc G,\thre, \mathbf C)=\min\left\{\sum\nolimits_i C_i(\infl_i)\,|\, \Infl\in\mc H\right\}\,.\ee
From representation (b) in Proposition \ref{prop:reprH} and the fact that $\Infl\mapsto F^n_{\Infl}(0) $ is a continuous map, we have that $\mc H$ is a compact set. This implies that the minimum above always exists.

We refer to the triple $(\mc G, \thre, \mathbf C)$ as to the linear threshold model (LTM) over $\mc G$ with threshold vector $\thre$ and vector of cost functions $\mathbf C$ and to the optimal value $C^*(\mc G,\thre, \mathbf C)$ as to the \emph{least activation cost} of the LTM $(\mc G,\thre, \mathbf C)$.

A special choice of the cost functions leads to a model largely analyzed in the literature. This is the case in which cost functions have the form $C_i(x)=c_i\1_{x>0}$, that is any nonzero intervention on an agent has the same cost. In this case, we can restrict to interventions $\Infl$ where $\infl_i\in\{0, \theta_i\}$ for the minimization problem \eqref{minimum-interv}. Such an intervention $\Infl$ can be identified with the subset $\mc W=\{i\,|\, \infl_i>0\}$ of agents on which the intervention is active. $\mc W$ is called the \emph{target} set and problem \eqref{minimum-interv} is called the \emph{target set selection problem}  (when $c_i=c$ for every $i$) or, when this homogeneity assumption does not hold, the \emph{weighted target set selection problem}. This problem has been introduced and studied in \cite{Chen:2009, Chen:2013} where its NP-completeness was proven. In \cite{Durand:2022}, for a more general 
model encompassing the LTM, a low complexity algorithm for an approximate solution of the weighted target set selection problem was proposed. This approach has then been extended to non-binary super-modular games in \cite{Messina.ea:23}. 

Another special case analyzed in the literature is when the cost functions $C_i(x)=c_ix$ are linear. In \cite{Cordasco.ea:2015, Gunnec.ea:2019} the problem has been introduced under the name of \emph{least cost influence problem} LCIS. Also in this case, the problem is shown to be NP-complete. Further complexity results are proven showing NP-completeness on special families of graphs (e.g., bipartite graphs, trees) and also establishing hardness-of-approximation results.

In next section, we show how the general problem \eqref{minimum-interv} can be reformulated as a pure combinatorial optimization problem, for every choice of the cost functions. Thereafter, we focus on the least cost influence problem: we present new bounds and a number of examples where an analytical solution can be found.

\section{A combinatorial reformulation of the problem}
In this section, we reformulate the least cost influence problem (\ref{minimum-interv}) as an optimization problem on the group of permutations of the node set $\mc V$. This allows us to introduce a number of concepts and energy interpretations that turn out to play a crucial role in the rest of our analysis.

Inspired by typical evolutions studied in game theory, where agents modify their behavior unilaterally, best-responding to the strategies of the other agents, we investigate here similar dynamics. Assume that all agents initially adopt action $0$ and then update their action one at a time and only once each, as the output of their threshold function $f_{\thre_iw_i-\infl_i}$ applied to the output of the previously activated agents. 
%
Let $\S_n$ be the group of permutations over the agent set $\mc V=\{1,\ldots,n\}$. The activation order is determined by the choice of a permutation $\sigma$ in $\S_{n}$. For $1\le t\le n$, the following expression is computed $$f_{\thre_{\sigma_t}w_{\sigma_t}-\infl_{\sigma_t}}(w_{\sigma_t}(\sigma))\,,$$
where
\be\label{weightsigma}w_{\sigma_t}(\sigma)=\sum\limits_{1\le s<t}W_{\sigma_t\sigma_s}\,,\ee
is the total weight of links connecting agent $\sigma_t$ to previously activated agents. 
Given the permutation $\sigma$, the minimum value of the intervention $\infl_{\sigma_t}$ that allows agent $\sigma_t$ to adopt action $1$ is given by
\be\label{intvectorsigma}\infl_{\sigma_t}({\sigma})=\thre_{\sigma_t}w_{\sigma_t}-w_{\sigma_t}(\sigma)\,.\ee
We assemble these values into a vector $\Infl({\sigma})$ in $\R_+^n$ to be referred to as the \emph{minimum intervention vector associated with $\sigma$}. Notice that some entries of $\Infl(\sigma)$ may be negative. We define the minimum cost associated with the permutation $\sigma$ as
\be\label{costsigma}
C(\sigma)=\sum\limits_{1\le i\le n}C_i([\infl_i(\sigma)]^+)=\sum\limits_{1\le i\le n}C_i([\thre_iw_i-w_i(\sigma)]^+)\ee
where $[x]^+=\max\{0,x\}$ denotes the positive part of a scalar $x$.  

The optimal activation cost computed in this way coincides with the one computed through the synchronous LTM dynamics defined by iterations of the map $F_{\Infl}$. 
\begin{proposition}\label{prop:optimal-cost} For every LTM $(\mc G,\theta)$, it holds that 
\be\label{optimalcost}
C^*(\mc G,\threabs)=\min\limits_{\sigma\in \S_{n}}C({\sigma})
\ee
\end{proposition}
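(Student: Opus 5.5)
We prove the two inequalities separately. The only facts needed are that each $C_i$ is non-decreasing and the characterization $\mc H=\{\Infl\,|\,F^n_{\Infl}(\0)=\1\}$ from Proposition \ref{prop:reprH}(b).

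\emph{Step 1: $C^*\le\min_\sigma C(\sigma)$.} Fix $\sigma$ and set $\Infl=[\Infl(\sigma)]^+$ componentwise, so that $\sum_i C_i(\infl_i)=C(\sigma)$. We must show $\Infl\in\mc H$. Let $\x(t)=F^t_{\Infl}(\0)$ and argue by induction on $t$ that $x_{\sigma_s}(t)=1$ for all $s\le t$. The base case $t=0$ is vacuous. Assume the claim at time $t-1$, and take any $s\le t$. All $\sigma_r$ with $r<s$ are active at time $t-1$, so
$$\sum_j W_{\sigma_s j}x_j(t-1)\ge w_{\sigma_s}(\sigma)\ge \thre_{\sigma_s}w_{\sigma_s}-\infl_{\sigma_s},$$
where the last inequality uses $\infl_{\sigma_s}\ge \infl_{\sigma_s}(\sigma)$. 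Hence $x_{\sigma_s}(t)=1$. At $t=n$ this gives $\x(n)=\1$, so $\Infl\in\mc H$ by Proposition \ref{prop:reprH}(b).

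\emph{Step 2: $C^*\ge\min_\sigma C(\sigma)$.} Let $\Infl\in\mc H$ be optimal; it exists by the compactness argument. By Proposition \ref{prop:reprH}, the trajectory $\x(t)=F^t_{\Infl}(\0)$ is monotone and reaches $\1$. Let $\tau_i=\min\{t\,|\,x_i(t)=1\}$ be the activation time of node $i$, and choose $\sigma$ listing the nodes in nondecreasing order of $\tau$, breaking ties arbitrarily. Take $i=\sigma_t$. Node $i$ activates at time $\tau_i$ because
$$\thre_iw_i-\infl_i\le \sum_j W_{ij}x_j(\tau_i-1).$$
Every $j$ active at time $\tau_i-1$ has $\tau_j\le\tau_i-1<\tau_i$, so $j$ precedes $i$ in $\sigma$. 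Since weights are nonnegative, this gives $\sum_j W_{ij}x_j(\tau_i-1)\le w_i(\sigma)$. Therefore $\infl_i\ge \thre_iw_i-w_i(\sigma)$, and since $\infl_i\ge0$ we get $\infl_i\ge[\infl_i(\sigma)]^+$. Monotonicity of each $C_i$ then yields $C^*=\sum_iC_i(\infl_i)\ge C(\sigma)$.

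The only delicate point is the tie-breaking in Step 2. Nodes activating at the same time must not be credited with each other's weight. Using the strict inequality $\tau_j<\tau_i$ settles this, because only nodes active at time $\tau_i-1$ contribute to $i$'s activation. Hence the ordering among simultaneously activated nodes is irrelevant, and the inequality $\sum_j W_{ij}x_j(\tau_i-1)\le w_i(\sigma)$ holds for any tie-breaking. Combining the two steps gives \eqref{optimalcost}.
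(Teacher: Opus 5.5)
Your proof is correct. Step 1 is the paper's argument: the same induction showing that $\sigma_1,\dots,\sigma_t$ are active by time $t$. You write it slightly more carefully than the paper. You use $[\Infl(\sigma)]^+$, which genuinely lies in $\R^n_+$, and you keep general non-decreasing $C_i$. The paper's write-up instead asserts $\Infl(\sigma)\in\mc H$ even though its entries may be negative, and compares against $\sum_i h_i$.

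The real difference is in Step 2, in how a permutation is extracted from $\Infl\in\mc H$. The paper runs an asynchronous process. At each step it activates one inactive node whose lowered threshold is already met by the currently active set. It argues such a node always exists, because otherwise the current state would be a fixed point of $F_{\Infl}$ different from $\1$, contradicting convergence from every initial condition. This tacitly uses monotonicity, to know that nodes activated earlier remain active under $F_{\Infl}$. You instead sort nodes by first activation time along the synchronous trajectory from $\0$. This needs only Proposition \ref{prop:reprH}(b), and the one delicate point, simultaneous activations, is settled by the strict inequality $\tau_j<\tau_i$. The paper's construction mirrors the one-agent-at-a-time interpretation that motivates the permutation formulation. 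Yours reads the permutation directly off the actual synchronous dynamics and is a bit more self-contained.

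One small remark: you need not start from an optimal $\Infl$. Your Step 2 applies verbatim to any $\Infl\in\mc H$. Together with Step 1, it shows that the infimum over $\mc H$ is attained at $[\Infl(\sigma)]^+$ for a minimizing $\sigma$. This is preferable to citing the compactness argument: $\mc H$ is upward closed and nonempty, hence unbounded, so it is not compact as claimed there.
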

\begin{proof}

We first show that for every permutation $\sigma$ in $\mc S_{n}$, we have that $\Infl(\sigma)\in\mc H$. First, we prove by induction that, for every $0\le t\leq n$, 
\be\label{monotone}x^{\Infl}(t)\geq \delta^{\sigma_1}+\cdots +  \delta^{\sigma_t}\,.\ee
For $t=0$, relation \eqref{monotone} trivially holds true. Moreover, if \eqref{monotone} holds true for some $t\ge0$, then, by definition of $\Infl(\sigma)$,
$$x^{\Infl}(t+1)_{\sigma_{t+1}}=f_{w_{\sigma_{t+1}}(\sigma)}((Wx^{\Infl}(t))_{\sigma_{t+1}})\geq f_{w_{\sigma_{t+1}}(\sigma)}(w_{\sigma_{t+1}}(\sigma))=1\,. $$
Since $x^{\Infl}(t+1)\geq x^{\Infl}(t)$, this yields relation (\ref{monotone}) for $t+1$. Relation (\ref{monotone}) for $t=n$  implies that $\Infl(\sigma)\in\mc H$. By definition, this yields $\leq$ in formula (\ref{optimalcost}).

To see that the converse inequality holds true as well, for every $\Infl$ in $\mc H$, we construct a permutation $\sigma^\Infl$ in $\mc S_n$ as follows. Start with $\x(0)=\0$ and, for $t=1,\ldots,n$ choose $i_t$ in $\mc V$ such that $x_{i_t}(t-1)=0$ and $f_{w_{i_t}\theta_{i_t}+h_{i_t}}(\sum_jW_{{i_t}j}x_j(t-1))=1$ and put $$\x(t)=\x(t-1)+\delta^{i_t}\,,\qquad \sigma^{\Infl}_t={i_t}\,.$$ 
Notice that such ${i_t}$ exists for every $1\le t\le n$ for otherwise $F_{\Infl}(\x(t-1))=\x(t-1)\ne\1$ and $\Infl\notin\mc H$. Moreover, by construction, $i_s\ne i_t$ for every $s\ne t$ so that $\sigma^{\Infl}$ is indeed a permutation in $\mc S_n$. Now, observe that $\Infl(\sigma^{\Infl})\le \Infl$, so that $C(\sigma^{\Infl})\le\sum_ih_i$. By the arbitrariness of $\Infl$ in $\mc H$ this implies that 
$$\min\limits_{\sigma\in \S_{n}}C({\sigma})\le\min\limits_{\Infl\in\mc H}C({\sigma}^{\Infl})\le\min\limits_{\Infl\in\mc H}\sum\nolimits_ih_i= C^*(\mc G,\threabs)\,,$$
thus completing the proof. 
\end{proof}
The set of permutations for which the cost is optimal is denoted $\mathcal S_{(\mc G, \threabs)}^*$ and we refer to each of them as to an \emph{optimal permutation activation}. 

%


\section{General bounds for the LCIS problem}
In the rest of this work, we restrict to the case when $C_i(x)=c_ix$. Since the combinatorial function to optimize $C(\sigma)$ can be rewritten as follows
$$C(\sigma)=\sum\limits_{1\le i\le n}c_i[\thre_iw_i-w_i(\sigma)]^+=\sum\limits_{1\le i\le n}[\thre_ic_iw_i-c_iw_i(\sigma)]^+\,,$$
an equivalent optimization problem can be obtained by changing $c$ to $\1$ and $W$ to $[c]W$. Therefore, from now on we assume that $c_i=1$ for every $i$ and thus focus on the combinatorial optimization problem
\be\label{comb-opt} \min\limits_{\sigma \in \mc S_{n}} \sum\limits_{1\le i\le n}c_i[\thre_iw_i-w_i(\sigma)]^+\,.\ee

{The optimization problem admits the following energy interpretation.
 For an agent $i$ to be activated, a minimum energy $\threabs_i$ is needed as input. When a specific permutation $\sigma$ is chosen, an agent $i$, at the moment of its activation, receives an amount of energy $w_i(\sigma)$ from the other previously activated agents and what is left $(\threabs_iw_i-w_i(\sigma))^+$ needs to be provided by the external intervention.  
A strategy, generally suboptimal, is that of maximizing the energy received by previously activated nodes, namely, to consider
\be\label{we-def} \we=\max\limits_{\sigma \in \mathcal S_n}\sum\limits_{i=1}^n w_i(\sigma)\,.\ee
Proposition \ref{prop:optimal-cost} yields the following simple bounds
\be\label{sumhsigma2}\left(\sum\limits_{i\in\mc V}\threabs_iw_i-\we\right)^+\leq C^*(\mc G, \threabs)\leq \sum\limits_{i\in\mc V}\threabs_iw_i\,.\ee
Both bounds can be achieved. 
Notice that the upper bound corresponds to
%
the cost of an intervention when $w_i(\sigma)=0$ for every $i$ and is trivially achieved when the agents are totally isolated $W_{ij}=0$ for every $i\neq j$. The lower bound represents instead the cost of an intervention when $w_i(\sigma)\leq \threabs_iw_i$ for every agent $i$ for a permutation maximizing \eqref{we-def}. This happens for instance in the extreme case where $\theta_i=1$ for every $i$.
Both bounds can be improved. To this aim, we consider the decomposition of the graph $\mc G$ into its connected components, and we denote by $\mc G_k=(\mc S_k, \mc E_k)$ for $k=1,\dots ,q$ the sink ones. We put $q_{\mc G}=q$ and we define 
\be  i^*_k\in\argmin\limits_{i\in \mc S_k} \thre_iw_i,\quad k=1,\dots , q,\qquad \rho=\sum\limits_{k=1}^q\thre_{i^*_k}w_{i^*_k},\qquad \mc V^*=\{i^*_k\,|\, k=1,\dots ,q\}\ee
We now introduce the two values
\be\label{indices} C^{max}(\mc G,\threabs)=\rho+\sum\limits_{i\in\mc V\setminus\mc V^*}(\thre_iw_i-w)^+, \quad C^{min}(\mc G,\threabs)=\rho+\left(\sum\limits_{i\in\mc V\setminus \mc V^*}\threabs_iw_i-\we\right)^+\ee
and the following set of permutations.
\begin{definition} Given a graph $\mc G$, a permutation $\sigma$ is called $\mc G$-\emph{greedy} if 
\be\label{sigma-neigh}  \sigma_t\in\bigcup\limits_{s<t}N^-_{\sigma_s}\ee
but for the first $q_{\mc G}$ values of $t$.
\end{definition}
\begin{proposition} Given a graph $\mc G$, the following facts hold:
\begin{enumerate}
\item $\mc G$-{greedy} permutations exist
\item For every $\mc G$-{greedy} permutation $\sigma$, we have that $C(\sigma)\leq C^{max}(\mc G,\threabs)$.
\end{enumerate}
\end{proposition}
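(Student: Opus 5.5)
The plan is to build a $\mc G$-greedy permutation explicitly by a backward breadth-first search from the set $\mc V^*$, and then bound its cost term by term. I read the objects in the statement as follows. The $\mc G_k$ are the sink strongly connected components of $\mc G$, meaning components with no link leaving them. $N^-_j=\{i\,|\,(i,j)\in\mc E\}$ is the set of nodes that receive weight from $j$ when $j$ is active, since agent $i$ collects $W_{ij}$ from active $j$. The constant $w$ appearing in $C^{max}$ is $w=\min\{W_{ij}\,|\,(i,j)\in\mc E\}$, the smallest positive link weight.

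\emph{Existence.} Put the $q$ nodes $i^*_1,\dots,i^*_q$ in the first $q$ positions, in any order. Then proceed inductively. Suppose $\sigma_1,\dots,\sigma_t$ have been chosen and some node is still unused. I claim there is an unused node $i$ with $(i,\sigma_s)\in\mc E$ for some $s\le t$, that is, $i\in N^-_{\sigma_s}$, and we set $\sigma_{t+1}=i$.

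To prove the claim, the key fact is that every node $i$ has a directed path to some node of $\mc V^*$. In the condensation of $\mc G$, which is a directed acyclic graph, every node reaches a sink component $\mc S_k$. Since $\mc S_k$ is strongly connected, the path can be continued inside $\mc S_k$ to $i^*_k$.

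Now take any unused node and such a path from it to $\mc V^*$. Along the path, let $i$ be the last node that is not yet activated. Its successor on the path is already activated, because the path ends in $\mc V^*$ and $\mc V^*$ was activated first. This node $i$ is the required one. After $n$ steps all nodes are ordered, so $\sigma$ is a permutation satisfying \eqref{sigma-neigh} for every $t>q_{\mc G}$.

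\emph{Cost bound.} Let $\sigma$ be any $\mc G$-greedy permutation and split the sum \eqref{costsigma} according to the position of each node.

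For the first $q_{\mc G}$ positions, use $w_i(\sigma)\ge 0$ to get $[\thre_iw_i-w_i(\sigma)]^+\le\thre_iw_i$. For the nodes $i^*_k$ these terms sum to at most $\rho$.

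For every later position $t>q_{\mc G}$, greediness gives some $s<t$ with $(\sigma_t,\sigma_s)\in\mc E$. Hence
\[
w_{\sigma_t}(\sigma)\ge W_{\sigma_t\sigma_s}\ge w ,
\]
and so $[\thre_{\sigma_t}w_{\sigma_t}-w_{\sigma_t}(\sigma)]^+\le(\thre_{\sigma_t}w_{\sigma_t}-w)^+$. Summing over the nodes in $\mc V\setminus\mc V^*$ gives $C(\sigma)\le C^{max}(\mc G,\threabs)$.

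One caveat concerns the phrase ``but for the first $q_{\mc G}$ values of $t$''. If a greedy permutation were allowed to place something other than $\mc V^*$ in those positions, the first-block terms would be bounded by $\thre_iw_i$ of whichever nodes sit there, and this need not be at most $\rho$. So I would read the definition, or the second claim, as requiring one initial node from each sink component, taken as the minimizer $i^*_k$. Any initial node outside a sink component would in fact violate existence of a valid continuation only if it had no path to $\mc V^*$, which cannot happen, but the cost bound would still fail.

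The main obstacle is the reachability step in the existence part, namely the condensation argument together with the choice of ``last unactivated node on the path.'' Once that is in place, the cost estimate is a routine term-by-term comparison.
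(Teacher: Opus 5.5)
Your proof is correct and follows the paper's route: place one node per sink component in the first $q_{\mc G}$ positions, extend greedily, and bound each later term using $w_{\sigma_t}(\sigma)\ge w$. Your last-unactivated-node-on-a-path argument fills in the extension step that the paper only asserts. Your reading that the first $q_{\mc G}$ positions must be exactly $i^*_1,\dots,i^*_q$ is what the paper's equality $C(\sigma)=\rho+\sum_{t>q}(\cdot)^+$ tacitly uses.
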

\begin{proof}
1. To prove the existence of $\mc G$-{greedy} permutations, it is sufficient to choose arbitrarily $\sigma_t\in\mc S_t$ for $t=1,\dots , q$ and then, successively, to choose the remaining elements imposing \eqref{sigma-neigh}. The fact that this is possible  is equivalent to the assumption that no connected component of $\mc G$ different from $\mc G_k$ for $k=1,\dots ,q$ is a sink component. 

2. If $\sigma$ is a $\mc G$-{greedy} permutation, then $w_t(\sigma)\geq w$ for every $t>q$ and thus
$$\begin{array}{rcl}C(\sigma)&=&\sum\limits_{t=1}^n(\thre_{\sigma_t}w_{\sigma_t}-w_t(\sigma))^+\\&=&\rho+\sum\limits_{t> q}(\thre_{\sigma_t}w_{\sigma_t}-w_t(\sigma))^+\\
&\leq &\rho+\sum\limits_{t>q}(\thre_{\sigma_t}w_{\sigma_t}-w)^+\\ &=&C^{max}(\mc G,\threabs)\end{array}$$
\end{proof}

We denote by $\mc S_n^g$ the subset of the $\mc G$-greedy permutations.
The following result holds.
\begin{corollary}[Basic bound] 
\label{cor:conservation} For the LTM $(\mc G,\threabs)$ on a graph $\mc G=(\mc V, \mc E, W)$ of order $n$ with $\thre_i>0$ for every $i$: 
\be\label{basic-bound}C^{min}(\mc G,\threabs)\leq C^*(\mc G, \threabs)\leq C^{max}(\mc G,\threabs)\,.\ee
\end{corollary}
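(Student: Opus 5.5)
The upper bound in \eqref{basic-bound} follows at once from the preceding proposition. $\mc G$-greedy permutations exist, and each of them satisfies $C(\sigma)\le C^{max}(\mc G,\threabs)$. Proposition \ref{prop:optimal-cost} says that $C^*(\mc G,\threabs)=\min_{\sigma\in\S_n}C(\sigma)$, and this minimum is at most $C(\sigma)$ for any greedy $\sigma$. So the upper bound requires no further work, and the effort goes into the lower bound.

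For the lower bound, I fix an arbitrary $\sigma\in\S_n$ and aim to show $C(\sigma)\ge C^{min}(\mc G,\threabs)$. The key structural observation concerns each sink component $\mc S_k$. Let $j_k$ be the element of $\mc S_k$ that appears first in $\sigma$. Since $\mc S_k$ is a sink component, all out-neighbors of $j_k$ lie in $\mc S_k$ (links $W_{ij}$ with $i\in\mc S_k$ have $j\in\mc S_k$). None of these neighbors has been activated before $j_k$, so $w_{j_k}(\sigma)=0$. Using $\thre_i>0$, this gives
\[
(\thre_{j_k}w_{j_k}-w_{j_k}(\sigma))^+=\thre_{j_k}w_{j_k}\ge \thre_{i^*_k}w_{i^*_k},
\]
by the choice of $i^*_k$ as a minimizer. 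Summing over $k$, the contribution of the nodes $J=\{j_1,\dots,j_q\}$ is at least $\rho$.

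For the remaining nodes, I drop the positive part to get
\[
\sum_{i\notin J}(\thre_iw_i-w_i(\sigma))^+\ \ge\ \Big(\sum_{i\notin J}\thre_iw_i-\sum_{i\notin J}w_i(\sigma)\Big)^+ .
\]
Two facts then finish the argument. First, $\sum_{i\notin J}w_i(\sigma)=\sum_i w_i(\sigma)\le\we$, because $w_{j_k}(\sigma)=0$; this uses \eqref{we-def}. Second, I need $\sum_{i\notin J}\thre_iw_i\ge\sum_{i\notin\mc V^*}\thre_iw_i-\big(\sum_{k}\thre_{j_k}w_{j_k}-\rho\big)$. This is not quite a direct comparison, so I will combine the two parts rather than bound them separately. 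Write $A=\sum_k\thre_{j_k}w_{j_k}$ and $B=\sum_{i\notin J}\thre_iw_i$. Then $A+B=\sum_i\thre_iw_i=\rho+\sum_{i\notin\mc V^*}\thre_iw_i$. Since $C(\sigma)\ge A+(B-\we)^+$, it suffices to show $A+(B-\we)^+\ge \rho+(A+B-\rho-\we)^+$. Setting $x=A-\rho\ge0$, this becomes $x+(B-\we)^+\ge (x+B-\we)^+$, which is the subadditivity of $(\cdot)^+$ together with $x^+=x$.

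The main obstacle is the sink-component step: justifying that the first activated node of each sink component receives zero energy. This depends on the orientation convention that links leaving $\mc S_k$ do not exist, i.e.\ $w_i=\sum_jW_{ij}$ only involves $j\in\mc S_k$. I would check this carefully against the definition of sink component. The remaining steps are routine manipulations of positive parts.
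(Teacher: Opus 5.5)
Your proof is correct and follows the paper's argument. The first-activated node of each sink component $\mc S_k$ receives no energy, so it is paid in full and these nodes contribute at least $\rho$ in total. The positive parts over the remaining nodes are then merged into a single positive part, and $\sum_i w_i(\sigma)\le\we$ by \eqref{we-def}. Your explicit reduction to $x+(B-\we)^+\ge(x+B-\we)^+$ with $x=A-\rho\ge 0$ just spells out the final step, which the paper asserts without detail.
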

 
\begin{proof} 
We only need to prove the lower bound. This  follows from the observation that for any permutation $\sigma$, the external planner must pay in full for at least $q$ times, each time that it chooses for the first time a node in $\mc S_k$ for $k=1,\dots , q$. 
Precisely, define
$$t^*_k=min\{t\,|\, \sigma_t\in\mc S_k\},\quad \mc T=\{t^*_1,\dots , t^*_q\},\quad \mc T^*=\{1,\dots , q\}\setminus\mc T$$
The following inequalities hold.
$$\begin{array}{rcl}C(\sigma)&=&\sum\limits_{t\in\mc T}\threabs_{\sigma_{t}}w_{\sigma_{t}}+\sum\limits_{t\in\mc T^*}(\threabs_{\sigma_t}w_{\sigma_t}-w_t(\sigma))^+\\ 
&\geq&\sum\limits_{t\in\mc T}\threabs_{\sigma_{t}}w_{\sigma_{t}}+\left(\sum\limits_{t\in\mc T^*}\threabs_{\sigma_t}w_{\sigma_t}- \sum\limits_{t\in\mc T^*}w_t(\sigma))^+\right)\\ 
\end{array}$$
Since, by construction, $\sum\limits_{t\in\mc T}\threabs_{\sigma_{t}}w_{\sigma_{t}}\geq \rho$ and 
$$\sum\limits_{t\in\mc T}\threabs_{\sigma_{t}}w_{\sigma_{t}}+\sum\limits_{t\in\mc T^*}\threabs_{\sigma_t}w_{\sigma_t}=\sum\limits_{i\in\mc V}\threabs_iw_i=\rho+\sum\limits_{i\in\mc V\setminus \mc V^*}\threabs_iw_i$$
the left-hand side inequality in \eqref{basic-bound} follows.
\end{proof}

There is a case where the bounds coincide and equal the least cost intervention. 
\begin{proposition}\label{prop:special} Let $\mc G=(\mc V, \mc E, W)$ be an unweighted graph of order $n$ and let $\threabs\in (0,1]^n$ be a threshold vector such that $\thre_iw_i\leq 1$ for every $i$. Then,
\be\label{conn}C^{min}(\mc G,\threabs)=C^{max}(\mc G,\threabs)= C^*(\mc G, \threabs)=\rho\ee
Moreover, any $\mc G$-greedy permutation is optimal.
%
\end{proposition}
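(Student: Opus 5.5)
Assuming (as in the $C^{max}$ computation) that in an unweighted graph every node has $W_{ij}=1$ on each link, the basic bound of Corollary~\ref{cor:conservation} sandwiches $C^*(\mc G,\threabs)$ between $C^{min}$ and $C^{max}$. It therefore suffices to show that $C^{max}(\mc G,\threabs)=\rho$ and that $C^{min}(\mc G,\threabs)=\rho$. Optimality of every $\mc G$-greedy permutation will then follow from part 2 of the preceding Proposition: for any $\mc G$-greedy $\sigma$ we get $\rho\le C^*\le C(\sigma)\le C^{max}=\rho$.

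For $C^{max}$, I read the quantity $w$ in \eqref{indices} as the minimal weight a node receives when it is activated by a greedy permutation after the first $q$ steps. In the unweighted case this is at least $1$, since such a node has at least one already-activated out-neighbour and each link carries weight $1$. The hypothesis $\thre_iw_i\le 1$ then makes every term $(\thre_iw_i-w)^+$ vanish, so $C^{max}=\rho$. To be safe I will redo the estimate directly rather than rely on the symbol $w$. For a $\mc G$-greedy $\sigma$ and $t>q$, condition \eqref{sigma-neigh} gives an $s<t$ with $W_{\sigma_t\sigma_s}=1$, so $w_{\sigma_t}(\sigma)\ge 1\ge \thre_{\sigma_t}w_{\sigma_t}$ and the $t$-th term of $C(\sigma)$ is zero.

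The first $q$ steps need more care. The first node activated in each sink component $\mc S_k$ receives nothing from nodes outside $\mc S_k$, because a sink component has no outgoing links; it therefore pays its full $\thre_iw_i$. I choose the first $q$ elements of $\sigma$ to be exactly $i^*_1,\dots,i^*_q$, which is allowed since the greedy definition leaves those choices free. The total cost is then exactly $\rho$.

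For $C^{min}$, the key step is to check $\sum_{i\notin\mc V^*}\thre_iw_i\le \we$, so that the positive part vanishes. Taking $\sigma$ to be the greedy permutation just built, every node outside $\mc V^*$ receives at least $1\ge\thre_iw_i$. Summing, $\sum_{i\notin\mc V^*}\thre_iw_i\le\sum_i w_i(\sigma)\le\we$, using the definition \eqref{we-def}. The only real obstacle I expect is bookkeeping. The existence argument in part 1 of the preceding Proposition needs every non-sink component to have an edge pointing toward already-activated nodes; I will check that the components are handled in a topological order of the condensation, so that the greedy extension never gets stuck. In fact the lower bound $C^*\ge\rho$ from Corollary~\ref{cor:conservation} together with the explicit permutation of cost $\rho$ already settles the central equality, independently of how $C^{min}$ is interpreted.
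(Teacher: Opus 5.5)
Your argument is essentially the paper's: you sandwich $C^*$ between $C^{min}$ and $C^{max}$ via Corollary~\ref{cor:conservation}, using that every greedy activation after step $q$ receives weight at least $1\ge\thre_iw_i$, and your bound $\sum_{i\notin\mc V^*}\thre_iw_i\le\sum_i w_i(\sigma)\le\we$ is just the paper's $\we\ge n-q$ made explicit. One caveat, which the paper's own proof shares: every $\mc G$-greedy permutation costs $\rho$ only if its first $q$ entries are the minimizers $i^*_k$ (on the path $1$--$2$--$3$ with $\thre=(1,\tfrac12,\tfrac12)$, the greedy $\sigma=(1,2,3)$ costs $1>\rho=\tfrac12$), so your first-paragraph appeal to part 2 of the preceding Proposition is valid only under the restriction you yourself impose in your third paragraph.
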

\begin{proof} 
Since $\we\geq n-q$ on every unweighted graph with $q$ sink connected components, we obtain that $C^{min}(\mc G,\threabs)=C^{max}(\mc G,\threabs)=\rho$. Moreover, if $\sigma$ is a $\mc G$-greedy permutation, then $w_t(\sigma)\geq 1$ for every $t>q$ and formula (\ref{costsigma}) yields $C(\sigma)=\rho$.
\end{proof}

\begin{example}[Directed Acyclic Graphs]\label{ex:DAG} Consider any DAG $\mc G$ over the set of nodes $\mc V=\{1,2,\dots , n\}$ and any threshold vector $\theta$. Each of the  $q=q_{\mc G}$ sink components are in this case composed of a single element: denote them by $i^*_1, \dots , i^*_q$. As $w_{i^*_k}=0$ for $k=1,\dots, q$, we obtain that $\rho=0$. 
Since $\mc G$ is a DAG,  we can find a total order of the nodes $k_1<k_2<\cdots <k_n$ such that for every $i, j\in\{1,\dots , n\}$, $W_{k_ik_j}>0$ implies $i>j$. If we now choose $\sigma_t=k_t$ for every $t$, we have that $w_{k_t}(\sigma)=w_{k_t}$ for every $t=1,\dots ,n$. Consequently, $C(\sigma)=\rho=0$. Such a permutation is thus optimal and $C^*(\mc G, \threabs)=0$.
\end{example}%

\section{LCIS over undirected graphs}
In this section, we analyze in greater detail the case when the underlying graph $\mc G$ is undirected. A striking result is that in this case the maximization problem in \eqref{we-def} is trivial as all permutations $\sigma$ reach the same value. Precisely, the following conservation law holds.

\begin{proposition}\label{lemma:equalize} For every undirected weighted graph $\mc G=(\mc V, \mc E, W)$ of order $n$,
$$\we = \frac{1}{2}\sum\limits_{i\in\mc V}w_i$$
Moreover, 
for every permutation $\sigma$ in $\mc S_{n}$,  
\be\label{equalize}\sum\nolimits_{i} w_i(\sigma)=\we \qquad  \sum\nolimits_{i} h_i(\sigma)=\sum\nolimits_{i}\thre_iw_i- \we
\,.\ee
\end{proposition}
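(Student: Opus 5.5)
The plan is to prove the second identity in \eqref{equalize} for an arbitrary permutation first, and then read off the formula for $\we$. The key point is that in an undirected graph $W$ is symmetric, so the sum $\sum_i w_i(\sigma)$ counts every link exactly once.

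Fix $\sigma$ in $\S_n$. By \eqref{weightsigma}, the total received weight can be written as a sum over ordered pairs of positions:
$$\sum_{i} w_i(\sigma)=\sum_{t=1}^n\sum_{1\le s<t}W_{\sigma_t\sigma_s}.$$
Since $\sigma$ is a bijection, the map $(s,t)\mapsto\{\sigma_s,\sigma_t\}$ with $s<t$ is a bijection onto the set of unordered pairs of distinct nodes. For each such pair $\{i,j\}$, the term that appears is $W_{ij}$ or $W_{ji}$, depending on which of the two nodes comes later in $\sigma$. Symmetry $W_{ij}=W_{ji}$ makes this choice irrelevant. Hence
$$\sum_i w_i(\sigma)=\sum_{\{i,j\},\,i\neq j}W_{ij}=\frac12\sum_{i\neq j}W_{ij}.$$
This is independent of $\sigma$. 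Taking the maximum in \eqref{we-def} therefore gives the same value, which yields both the formula for $\we$ and the first identity in \eqref{equalize}.

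The one subtlety is self-loops. The diagonal entries $W_{ii}$ never contribute to $w_i(\sigma)$, because the sum in \eqref{weightsigma} runs over $s<t$, i.e. over nodes other than $\sigma_t$. They do, however, contribute to $w_i$. The identity $\frac12\sum_{i\ne j}W_{ij}=\frac12\sum_i w_i$ therefore requires $W_{ii}=0$. I will state this explicitly as the standing convention for undirected graphs without self-loops, which is implicit in the paper's use of $w_i$ as a degree. If self-loops were allowed, the right-hand side would have to be corrected by $\frac12\sum_i W_{ii}$.

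Finally, the identity for $h$ is direct. By \eqref{intvectorsigma}, $h_i(\sigma)=\thre_iw_i-w_i(\sigma)$, where these entries are signed rather than positive parts. Summing over $i$ and using the first identity gives
$$\sum_i h_i(\sigma)=\sum_i\thre_iw_i-\we.$$
No step is a real obstacle. The main care needed is the bijection between position pairs $s<t$ and unordered node pairs, together with the self-loop convention.
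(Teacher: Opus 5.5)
Your proof is correct and takes the same route as the paper. You rewrite $\sum_i w_i(\sigma)$ as $\sum_{s<t}W_{\sigma_t\sigma_s}$ and use symmetry of $W$ to identify it with half the full double sum, which is independent of $\sigma$ and so also gives $\we$; summing \eqref{intvectorsigma} then yields the identity for $h_i(\sigma)$. Your remark about self-loops is accurate: the paper's equality $\sum_{s<t}W_{\sigma_t\sigma_s}=\frac{1}{2}\sum_{t}\sum_{s}W_{\sigma_t\sigma_s}$ tacitly assumes $W_{ii}=0$.
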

\begin{proof}
From (\ref{weightsigma}) we obtain
$$\sum\nolimits_{i} w_i(\sigma)=\sum\limits_{1\le t\le n} w_{\sigma_t}(\sigma)
=\sum\limits_{1\le s<t\le n}W_{\sigma_t\sigma_s}
=\frac{1}{2}\sum\limits_{1\le t\le n}\sum\limits_{1\le s\le n}W_{\sigma_t\sigma_s}=\frac{1}{2}\sum\nolimits_{i}w_i\,,$$
This proves the first formula. The second one is a direct consequence of \eqref{intvectorsigma} and previous formula.
\end{proof}

Symmetry of the interaction among the agents (following from the assumption that $W$ is symmetric) yields an interesting reversibility with respect to a transformation in which agents exchange their thresholds $\thre_i$ with the complementary ones $1-\thre_i$. 
%
%
%
Precisely, for every permutation $\sigma$ in $\mc S_{n}$, we define the \emph{reversed permutation} $\bar\sigma$ as
$$\bar\sigma_i= \sigma_{n-i+1}\,,\qquad 1\le i\le n\,.$$
The cost of the permutation $\sigma$ for the LTM $(\mc G, \Thre)$ turns out to be intimately connected to the cost of $\bar \sigma$ for the reversed LTM $(\mc G, \1-\Thre)$. This is clearly stated below.
\begin{lemma}\label{lemma:reversed} Consider an LTM $(\mc G, \Thre)$ and indicate with $C$ and $\bar C$, respectively, the cost functions related to $(\mc G, \Thre)$ and to the reversed LTM $(\mc G, \1-\Thre)$. Then, for every permutation
$\sigma$ in $\S_{n}$, 
\be\label{dualcost}\bar C({\bar \sigma})=C({\sigma})-\sum\limits_{i\in\mc V}\threabs_{i}w_i+\we
\ee
\end{lemma}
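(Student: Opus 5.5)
The plan is to compare, agent by agent, the energy each node receives under $\sigma$ and under $\bar\sigma$, and then to use the identity $[-x]^+=[x]^+-x$ together with the conservation law of Proposition \ref{lemma:equalize}. Throughout I take $W$ symmetric with no self-loops ($W_{ii}=0$), which is the setting in which Proposition \ref{lemma:equalize} was proved.

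\emph{Step 1: complementarity of received energy.} Fix $i\in\mc V$ and let $t$ be such that $\sigma_t=i$. Then $i=\bar\sigma_{n-t+1}$. By \eqref{weightsigma}, $w_i(\sigma)$ is the total weight of links from $i$ to the nodes $\sigma_s$ with $s<t$, that is, to the nodes activated before $i$ under $\sigma$. Similarly, $w_i(\bar\sigma)$ is the total weight of links from $i$ to the nodes $\bar\sigma_r$ with $r<n-t+1$. Writing $\bar\sigma_r=\sigma_{n-r+1}$, these are exactly the nodes $\sigma_s$ with $s>t$. Since $W_{ii}=0$, every neighbor of $i$ lies in exactly one of these two sets, so
$$w_i(\sigma)+w_i(\bar\sigma)=\sum_{j\neq i}W_{ij}=w_i.$$

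\emph{Step 2: sign reversal of the intervention vector.} By \eqref{intvectorsigma} applied to the reversed LTM $(\mc G,\1-\Thre)$, the minimum intervention vector associated with $\bar\sigma$ is
$$\bar\infl_i(\bar\sigma)=(1-\thre_i)w_i-w_i(\bar\sigma).$$
Using Step 1, this equals $(1-\thre_i)w_i-w_i+w_i(\sigma)=-(\thre_iw_i-w_i(\sigma))=-\infl_i(\sigma)$.

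\emph{Step 3: summation.} Since $[-x]^+=[x]^+-x$ for every real $x$, I get
$$\bar C(\bar\sigma)=\sum_i[-\infl_i(\sigma)]^+=\sum_i[\infl_i(\sigma)]^+-\sum_i\infl_i(\sigma)=C(\sigma)-\sum_i\infl_i(\sigma).$$
The second identity in \eqref{equalize} gives $\sum_i\infl_i(\sigma)=\sum_i\thre_iw_i-\we$, and substituting this yields \eqref{dualcost}.

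The only delicate point is Step 1. The index bookkeeping under reversal has to be checked carefully, and the identity really relies on symmetry: $w_i(\bar\sigma)$ is defined through the same row $W_{i\cdot}$ as $w_i(\sigma)$. This matters because for directed graphs the reversed LTM would naturally involve the transposed weights, and the identity would fail. If self-loops were allowed, both $w_i(\sigma)$ and $w_i(\bar\sigma)$ would miss the term $W_{ii}$. The correction would then have to be carried through Step 2, and I would simply exclude this case, as the paper implicitly does.
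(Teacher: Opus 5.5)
Your proof is correct and follows the paper's argument exactly: the complementarity $w_i(\bar\sigma)=w_i-w_i(\sigma)$, then the identity $[-x]^+=[x]^+-x$, then the conservation law of Proposition \ref{lemma:equalize}, with your Step 1 supplying the index bookkeeping the paper leaves implicit. One small correction to your closing remark: Step 1 holds for any weight matrix without self-loops, because both sides use the row $W_{i\cdot}$ by definition. Symmetry is needed only in Step 3, where the conservation law makes $\sum_i w_i(\sigma)$ independent of $\sigma$.
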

\begin{proof}
The key point is to observe that the definition of the reversed permutation $\bar\sigma$ yields $w_i(\bar\sigma)=w_i-w_i(\sigma)$. This yields
$$\begin{array}{rcl}\bar C({\bar\sigma})&=&\sum\nolimits_{i} [(1-\theta_i)w_i-w_i(\bar\sigma)]^+\\ [3pt]
&=&\sum\nolimits_{i} [\theta_iw_i-w_i(\sigma)]^-\\[3pt]
&=&\sum\nolimits_{i} [\theta_iw_i-w_i(\sigma)]^+- \sum\nolimits_{i} (\theta_iw_i-w_i(\sigma))\\[3pt]
&=&
C(\sigma)-(\sum\nolimits_{i}\threabs_{i}w_i-\we)\,,
\end{array}$$
thus proving the result.
\end{proof}

Previous result yields a direct connection between the optimization problems relative to the two LTM $(\mc G,\Thre)$ and $(\mc G,\1-\Thre)$.

\begin{corollary}[reversal]
\label{cor:rev} Consider an LTM $(\mc G, \Thre)$. Then,
$$C^*(\mc G,\Thre)=C^*(\mc G,\1-\Thre)-\sum\limits_{i\in\mc V}\threabs_{i}w_i+\we$$
and
 $$\sigma\in \mathcal S_{(\mc G,\Thre)}^*\;\Leftrightarrow\; \bar\sigma\in \mathcal S_{(\mc G,\1-\Thre)}^*$$
\end{corollary}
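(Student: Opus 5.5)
The plan is to derive the corollary directly from Lemma \ref{lemma:reversed}, using two facts: the reversal map $\sigma\mapsto\bar\sigma$ is a bijection of $\S_n$, and it is an involution ($\bar{\bar\sigma}=\sigma$). To land on the displayed form, where the correction term sits next to $C^*(\mc G,\1-\Thre)$, I will apply Lemma \ref{lemma:reversed} to the LTM $(\mc G,\1-\Thre)$ rather than to $(\mc G,\Thre)$. Its reversed LTM is $(\mc G,\Thre)$. So for every $\sigma\in\S_n$, writing $C_{\Thre}$ and $C_{\1-\Thre}$ for the two cost functions,
$$C_{\Thre}(\bar\sigma)=C_{\1-\Thre}(\sigma)-\sum\nolimits_{i}(1-\threabs_i)w_i+\we .$$
Here the subtracted threshold sum is the one of the LTM appearing on the right-hand side. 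This is the reading under which the displayed formula holds, with $\threabs_i$ in the sum understood as the thresholds of that LTM, namely $1-\threabs_i$.

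\emph{Step 1 (minimize).} The additive correction does not depend on $\sigma$. Take the minimum over $\sigma\in\S_n$ on both sides. Since $\sigma\mapsto\bar\sigma$ is a bijection, $\min_\sigma C_{\Thre}(\bar\sigma)=\min_\tau C_{\Thre}(\tau)$. Proposition \ref{prop:optimal-cost} identifies both minima with least activation costs, which gives
$$C^*(\mc G,\Thre)=C^*(\mc G,\1-\Thre)-\sum\nolimits_i(1-\threabs_i)w_i+\we .$$
The same identity can also be rewritten in the form $C^*(\mc G,\Thre)=C^*(\mc G,\1-\Thre)+\sum_i\threabs_iw_i-\we$, using Proposition \ref{lemma:equalize}, which gives $\sum_iw_i=2\we$. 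I will state it in the form matching the corollary's statement, with the threshold vector of the right-hand LTM in the sum.

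\emph{Step 2 (optimal sets).} Because the two costs differ by a constant under the bijection $\sigma\mapsto\bar\sigma$, $\sigma$ attains the minimum of $C_{\1-\Thre}$ if and only if $\bar\sigma$ attains the minimum of $C_{\Thre}$. Applying the involution, and running the same argument with the roles of $\Thre$ and $\1-\Thre$ swapped, gives $\sigma\in\mathcal S^*_{(\mc G,\Thre)}\Leftrightarrow\bar\sigma\in\mathcal S^*_{(\mc G,\1-\Thre)}$.

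The only delicate step is the bookkeeping of which threshold vector appears in the correction term. If the lemma is instead applied to $(\mc G,\Thre)$ and the sum is read with the original thresholds $\threabs_i$, the correction term comes out with the opposite sign. So I will fix the convention explicitly before minimizing. I will also check consistency: the two readings agree via $\sum_iw_i=2\we$.
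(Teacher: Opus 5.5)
Your argument is correct and is the paper's own: there the corollary is simply a direct consequence of Lemma~\ref{lemma:reversed}, obtained by minimizing over the bijection (indeed involution) $\sigma\mapsto\bar\sigma$ of $\S_n$ together with Proposition~\ref{prop:optimal-cost}. Your caution about the correction term is justified and should be stated outright rather than as a ``reading'': with $\threabs_i$ taken as the entries of $\Thre$, the displayed identity is false (a single undirected edge with $\Thre=\1$ gives $C^*=1$ on the left and $0-2+1=-1$ on the right), since applying the lemma to $(\mc G,\Thre)$ gives the formula with $C^*(\mc G,\Thre)$ and $C^*(\mc G,\1-\Thre)$ interchanged, which is equivalent to your version via $\sum_iw_i=2\we$.
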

\begin{proof}
Direct consequence of Lemma \ref{lemma:reversed}.
\end{proof}

}

\subsection{Some basic examples}
In this subsection, we carry on some exact computation for some simple basic topologies. Throughout this section, we stick to undirected graphs $\mc G=(\mc V,\mc E, W)$ where $W_{ij}\in\{0,1\}$ for every $i,j$. In this case we refer to $\mc G$ as to an unweighted graph: in this case $w_i$ coincides with the degree of node $i$, namely the cardinality of the neighborhood $N_i$ of node $i$. 

\begin{example}[Line and Ring graphs]\label{ex:ring0}\label{ex:ring1} Consider the line graph $L_n$ or the ring graph $R_{n}$ over the set $\mc V=\{1,2,\dots , n\}$. 
Assuming that nodes are numbered in the usual way, as in the top part of Figure \ref{fig:line},
the cost of the identity permutation $\sigma=id$ for any LTM over $L_n$ or $R_n$ can be computed as follows
\be\label{trivial-line}
C({id})=w_1\thre_1+\sum\limits_{i=2}^{n-1}[2\thre_i-1]^+\ee
Notice that $w_1=1$ in the case of a line while $w_1=2$ in the case of a ring. 

In the special case when $\thre_i=\theta\leq 1/2$ for all $i$, we have that $C({id})=w_1\thre_1=\min\{w_i\thre_i\}$ and Proposition \ref{prop:special} implies that $id$ is an optimal permutation.
%
%
The mechanism underlying this result is the following: once the first agent becomes active, the topological structure of the line together with the fact that the threshold is below $1/2$ leads to a cascade effect that activates the entire line without requiring further intervention energy.

If thresholds are different, the identity permutation is in general not optimal. There are two special cases where an exact solution can nevertheless be obtained. This occurs when thresholds are either all below or all above $1/2$.

In the first case, $\thre_i\leq 1/2$ for every $i$, by Proposition \ref{prop:special}, we obtain that any permutation satisfying (\ref{sigma-neigh}) is optimal. In particular $id$ is optimal if and only if $w_1\thre_1=\min\{w_i\thre_i\}$.
%

We now use Corollary \ref{cor:rev} to solve the case when $\theta_i\geq 1/2$ for all $i$. The optimal permutation is given by any $\sigma^*$ such that $\bar{\sigma^*}$ is the optimal permutation for the model with complementary thresholds $1-\thre_i$. This implies that $\sigma^*$ is characterized as follows.
$$\sigma^*_n=\argmin\limits_{i\in\mc V}((1-\thre_i)w_i),\qquad \sigma^*_{k}\in 
\left(\bigcup\limits_{h\geq k+1}N_{\sigma^*_{h}}\right)\setminus\{\sigma^*_{k+1},\dots , \sigma^*_{n}\}
$$
Notice that $\sigma^*_1$ is not necessarily an agent minimizing $\thre_iw_i$ and $\sigma^*$ not necessarily is a $\mc G$-greedy permutation. Using formula (\ref{dualcost}), the cost of this permutation is given by
$$C(\sigma^*)=\min_{i\in\mc V}(1-\thre_i)w_i-2\sum_{i\in\mc V}\left(1-\theta_i-\frac{1}{2}\right)
=\min_{i\in\mc V}(1-\thre_i)w_i+\sum_{i\in\mc V}\left(2\theta_i-\1\right)\,.$$

\end{example} 

\begin{figure}
\begin{center}
\includegraphics[height=0.4cm]{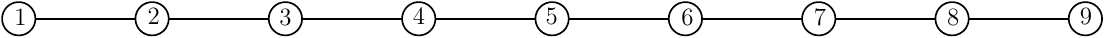} 
\end{center}
\caption{The line graph with $n=9$ }
\label{fig:line}
\end{figure}

\begin{example}[Trees] Let $\mc G=(\mc V, \mc E, W)$ be an unweighted tree of order $n$ and let $\Thre$ be any threshold vector. Assume that $\thre_iw_i\geq 1$ for every $i$. 
Notice that  $\we=n-1$ in this case. Using definitions (\ref{indices}) we obtain that $C^{min}(\mc G,\threabs)=C^{max}(\mc G,\threabs)$ and any $\mc G$-greedy permutation $\sigma$ is thus optimal and we obtain
$$C(\sigma)=\thre_{\sigma_1}w_{\sigma_1}+\sum\limits_{1\le t\le n}(\thre_{\sigma_t}w_{\sigma_t}-1)=\sum\limits_{i\in\mc V}\thre_iw_i- (n-1)$$
\end{example}
%
%

\begin{example}[Complete graph] Consider the complete graph $K_{n}$. Let $\Thre$ be any threshold vector and consider any permutation $\sigma^*$ in $\mc S_{n}$ such that thresholds are in non-decreasing order, namely,
\be\label{permutation-order}h<k\;\Rightarrow\theta_{\sigma^*_h}\leq \theta_{\sigma^*_k}\,.\ee
Then $\sigma^*$ is an optimal permutation. Indeed, if $\sigma\in\mc S_{n}$ is any permutation, consider the quantity
\be\label{absolute}\sum\limits_{i\in\mc V}|\theta_iw_i-w_i(\sigma)|=\sum\limits_{1\le k\le n}|\theta_{\sigma_k}(n-1)-(k-1)|\,,\ee
and notice that if, for some $k$, it happens that $\theta_{\sigma_k}>\theta_{\sigma_{k+1}}$, then it holds
$$ |\theta_{\sigma_k}(n-1)-(k-1)|+|\theta_{\sigma_{k+1}}(n-1)-k|>
|\theta_{\sigma_{k+1}}(n-1)-(k-1)|+|\theta_{\sigma_{k}}(n-1)-k|\,.$$
In other words, if $\sigma$ does not satisfy condition (\ref{permutation-order}), then there exists another permutation that allows to lower the value (\ref{absolute}).
Since the cost can also be written as 
$$C(\sigma)=\sum_{i\in\mc V}[\theta_iw_i-w_i(\sigma)]^+=\frac{1}{2}\left(\sum\limits_{i\in\mc V}\theta_iw_i-\we\right)+\frac{1}{2}\sum_{i\in\mc V}|\theta_iw_i-w_i(\sigma)|\,,$$
this implies that if a permutation does not satisfy  (\ref{permutation-order}), it cannot be optimal. As optimal permutations surely exist, they must necessarily be of the form above. Notice moreover that, whenever $\theta_{\sigma_k}=\theta_{\sigma_{k+1}}$, inverting the two agents would not modify the cost. This says that all permutations for which the thresholds are in non-decreasing order in the sense of (\ref{permutation-order}) are optimal.
\end{example}

%

\section{Conclusion}
We have introduced and studied a new and natural optimal intervention problem for the linear threshold model. Future research directions include formally proving the complexity of the resulting combinatorial optimization problem and developing approximation algorithms for its solution. 


\noindent

\end{document}